\newtheorem*{ThmA}{Theorem A}
\newtheorem{Thm}{Theorem} 
\newaliascnt{Lem}{Thm}
\newtheorem{Lem}[Lem]{Lemma}
\newaliascnt{Prop}{Thm}
\newtheorem{Prop}[Prop]{Proposition}
\newaliascnt{Cor}{Thm}
\theoremstyle{definition}
\newaliascnt{Def}{Thm}
\newaliascnt{Ex}{Thm}
\numberwithin{equation}{section}
\renewcommand{\phi}{\varphi}
\newcommand{\C}{\operatorname{C}}
\newcommand{\N}{\operatorname{N}}
\newcommand{\ZZ}{\mathbb{Z}}
\newcommand{\Aut}{\operatorname{Aut}}
\newcommand{\Sym}{\operatorname{Sym}}
\newcommand{\pcore}{\operatorname{O}}
\newcommand{\Syl}{\operatorname{Syl}}
\newcommand{\Ker}{\operatorname{Ker}}
\mathchardef\ordinarycolon\mathcode`\:  
\title{Pseudo Sylow numbers}
\author{Benjamin Sambale\footnote{Fachbereich Mathematik, TU Kaiserslautern, 67653 Kaiserslautern, Germany, 
\href{mailto:sambale@mathematik.uni-kl.de}{sambale@mathematik.uni-kl.de}}}
\date{\today}
\begin{document}
\frenchspacing
\maketitle
\begin{abstract}\noindent
One part of Sylow's famous theorem in group theory states that the number of Sylow $p$-subgroups of a finite group is always congruent to $1$ modulo $p$. Conversely, Marshall Hall has shown that not every positive integer $n\equiv 1\pmod{p}$ occurs as the number of Sylow $p$-subgroups of some finite group. While Hall's proof relies on deep knowledge of modular representation theory, we show by elementary means that no finite group has exactly $35$ Sylow $17$-subgroups.
\end{abstract}

\section{Introduction}

Every student of abstract algebra encounters at some point one of the most fundamental theorems on finite groups: 

\begin{Thm}[\textsc{Sylow}]\label{sylow}
Let $G$ be a group of finite order $n=p^am$ where $p$ is a prime not dividing $m$. 
Then the number of subgroups of $G$ of order $p^a$ is congruent to $1$ modulo $p$. In particular, there is at least one such subgroup.
\end{Thm}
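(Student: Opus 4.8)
The plan is to give Wielandt's counting argument, which establishes the congruence and existence simultaneously and directly counts subgroups of order $p^a$ without any prior appeal to conjugacy. First I would let $\Omega$ be the collection of all subsets $S\subseteq G$ with $|S|=p^a$, so that $G$ acts on $\Omega$ by left multiplication $g\cdot S=gS$. The entire proof rests on evaluating the cardinality $|\Omega|=\binom{p^am}{p^a}$ modulo $p$ in two different ways: once by a number-theoretic computation, and once by decomposing $\Omega$ into $G$-orbits.

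For the first evaluation I would prove the congruence $\binom{p^am}{p^a}\equiv m\pmod{p}$. The cleanest route is the identity $(1+x)^p\equiv 1+x^p$ in $\FF_p[x]$, which iterates to $(1+x)^{p^a}\equiv 1+x^{p^a}$ and hence to $(1+x)^{p^am}\equiv(1+x^{p^a})^m$; comparing the coefficients of $x^{p^a}$ on both sides gives $\binom{p^am}{p^a}\equiv\binom{m}{1}=m$.

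For the orbit side, fix $S\in\Omega$ and let $H=G_S$ be its stabilizer. Since $HS=S$, the set $S$ is a union of right cosets of $H$, whence $|H|$ divides $|S|=p^a$; therefore the orbit length $[G:H]=p^am/|H|$ is coprime to $p$ exactly when $|H|=p^a$, and in that case $S=Hs$ is a single right coset of a subgroup of order $p^a$. Conversely, any such coset has stabilizer $H$, so the subsets lying in orbits of length prime to $p$ are precisely the right cosets of the subgroups of order $p^a$. A subgroup is recovered from any one of its cosets via $H=\{st^{-1}:s,t\in S\}$, so distinct subgroups contribute disjoint families of cosets, and if $k$ denotes the number of subgroups of order $p^a$ there are exactly $km$ such subsets in total. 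Decomposing $|\Omega|$ into orbit lengths, the orbits whose length is divisible by $p$ contribute $0$ modulo $p$, while the remaining orbits account for exactly these $km$ cosets. Hence $m\equiv|\Omega|\equiv km\pmod p$, and since $p\nmid m$ cancellation yields $k\equiv 1\pmod p$; in particular $k\geq 1$, so a subgroup of order $p^a$ exists.

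The main obstacle is the orbit bookkeeping. It is tempting to match subgroups with orbits directly, but a single left-multiplication orbit may contain cosets of several different subgroups, so the safe route is to count \emph{subsets} rather than orbits: one verifies that the prime-to-$p$ subsets are exactly the right cosets of order-$p^a$ subgroups and that the recovery $H=\{st^{-1}:s,t\in S\}$ keeps these families disjoint, which is what produces the clean total $km$. The number-theoretic congruence is the only other ingredient, and it is routine once the identity $(1+x)^{p^a}\equiv 1+x^{p^a}\pmod p$ is in hand.
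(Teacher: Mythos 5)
Your proof is correct and complete; it is Wielandt's counting argument. The one thing to note in comparison is that the paper does not prove this statement at all: it quotes Sylow's theorem as a classical result and points to the literature (Sylow's original 1872 paper, Waterhouse's survey of early proofs, and Robinson's \textsc{Monthly} note) for proofs, reserving its own arguments for Theorem~A. So your write-up supplies content the paper deliberately omits, and it does so in a way that fits the paper's toolkit: the only machinery you invoke is the action of $G$ on a finite set and the orbit decomposition, which is exactly the ``first principles of group actions'' framework the paper sets up (\autoref{orbstab}) for its own purposes. All the delicate points are handled correctly: the congruence $\binom{p^am}{p^a}\equiv m\pmod p$ via $(1+x)^{p^a}\equiv 1+x^{p^a}$ in $\FF_p[x]$; the observation that a stabilizer $H=G_S$ satisfies $HS=S$, so $|H|$ divides $p^a$ and the orbit length $p^am/|H|$ is prime to $p$ exactly when $|H|=p^a$, in which case $S$ is a single right coset; the converse that every right coset of an order-$p^a$ subgroup has that subgroup as stabilizer; and the recovery formula $H=\{st^{-1}:s,t\in S\}$, which guarantees that the cosets of distinct subgroups are disjoint and yields the clean count $km\equiv m\pmod p$, whence $k\equiv 1\pmod p$ by cancellation ($p\nmid m$). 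Your closing caveat is also well taken: matching subgroups to \emph{orbits} rather than to \emph{subsets} would fail, since the orbit of $Hs$ contains cosets of every conjugate of $H$, and counting subsets sidesteps this entirely. One could add, as a bonus, that your argument proves slightly more than the statement requires in the existence direction, since it needs no induction and no prior case (such as Cauchy's theorem), whereas several of the cited classical proofs do.
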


The subgroups described in \autoref{sylow} are called \emph{Sylow $p$-subgroups} of $G$.
Apart from Sylow's original proof~\cite{Sylow} from 1872, a number of different proofs appeared in the literature and they are presented in the survey article \cite{Waterhouse}. More recently, a very elementary proof by Robinson~\cite{RobinsonSylow} of the last part of \autoref{sylow} appeared in \textsc{Monthly}.

Fixing $p$, it is natural to ask if every positive integer $n\equiv 1\pmod{p}$ is a \emph{Sylow $p$-number}, i.\,e., $n$ is the number of Sylow $p$-subgroups of some finite group. 
Certainly, $n=1$ is a Sylow $p$-number for the trivial group $G=\{1\}$ and every prime $p$. Moreover, every odd $n$ is a Sylow $2$-number for the dihedral group of order $2n$. This is the symmetry group of the regular $n$-gon and the Sylow $2$-subgroups are in one-to-one correspondence with the reflections. For odd primes $p$ the question is more delicate.

Philip Hall~\cite{Hallgroups} observed that in \emph{solvable} groups the prime factorization of a Sylow $p$-number $n=p_1^{a_1}\cdots p_s^{a_s}$ satisfies $p_i^{a_i}\equiv 1\pmod{p}$ for $i=1,\ldots,s$. For example, no solvable groups has exactly six Sylow $5$-subgroups. Nevertheless, the symmetry group of the dodecahedron of order $120$ does have six Sylow $5$-subgroups which can be identified with the $5$-fold rotations of the six axes. 
About forty years later Marshall Hall~\cite{MHall} reduced the determination of the Sylow $p$-numbers to \emph{simple} groups. (Recall that simple groups are like prime numbers in that they have only two normal subgroups: the trivial group and the whole group.) More precisely, he showed that every Sylow $p$-number is a product of prime powers $q^t\equiv 1\pmod{p}$ and Sylow $p$-numbers of (nonabelian) simple groups.
Conversely, every such product is in fact a Sylow $p$-number which can be seen by taking suitable direct products of affine groups and simple groups. 

Since nowadays the extremely complicated classification of the finite simple groups is believed to be complete (see \cite{CFSG}), one can in principle determine the Sylow $p$-numbers by going through the list of simple groups (see \cite{OEIS}). M. Hall instead used Brauer's sophisticated theory of $p$-blocks of defect $1$ (a part of modular representation theory) to show that \emph{not} every positive integer $n\equiv 1\pmod{p}$ is a Sylow $p$-number. More precisely, he constructed such \emph{pseudo} Sylow $p$-numbers for every odd prime $p$ (for instance $n=22$ works for $p\in\{3,7\}$). In the present paper we are content to provide only one such number which is a special case of \cite[Theorem~3.1]{MHall}:

\begin{ThmA}\label{17}
No finite group has exactly $35$ Sylow $17$-subgroups.
\end{ThmA}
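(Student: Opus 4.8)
The plan is to argue by contradiction: suppose $G$ is a finite group with exactly $35$ Sylow $17$-subgroups, chosen of \emph{minimal} order among all such groups. Write $P\in\Syl_{17}(G)$, so that $[G:\N_G(P)]=35$, and let $\Omega=\Syl_{17}(G)$, on which $G$ acts transitively by conjugation. First I would carry out the standard minimality reductions. Passing to $G/\pcore_{17}(G)$ leaves the Sylow number unchanged, which forces $\pcore_{17}(G)=1$; and examining the kernel $K$ of the action on $\Omega$ — a subgroup of $\N_G(P)$ whose Sylow $17$-subgroup would lie in $\pcore_{17}(G)=1$, hence a $17'$-group — lets me conclude, again by minimality, that $K=1$. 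Thus I may assume $G\le\Sym(\Omega)=\Sym(35)$ acts faithfully and transitively with point stabilizer $\N_G(P)$, and of course $35\equiv 1\pmod{17}$.

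Next I would record the one feature that makes the number $35$ special. Two \emph{distinct} Sylow $17$-subgroups can never normalize one another, since their product would be a $17$-group properly containing a Sylow subgroup; hence $P$ fixes only the point $P$ itself in $\Omega$, and the remaining $34=17+17$ points split into exactly two $P$-orbits of length $17$. Embedding the faithful action into $\Sym(35)$ then shows that $P$ is isomorphic to a subgroup of $C_{17}\times C_{17}$, so $P$ is \emph{elementary abelian} of order $17$ or $17^2$. This abelianness is the structural windfall I would exploit later.

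The core of the argument is an analysis of a minimal normal subgroup $M$ of $G$, driven by the arithmetic observation that the only divisors of $35$ congruent to $1\pmod{17}$ are $1$ and $35$. Applying this to the $G$-equivariant surjections $\Syl_{17}(G)\to\Syl_{17}(M)$ and $\Syl_{17}(G)\to\Syl_{17}(G/M)$ forces both Sylow numbers into $\{1,35\}$. Together with $\pcore_{17}(G)=1$ and minimality this gives a clean dichotomy: either $17\mid|M|$, whence $M=G$ is nonabelian simple (the exponent $35$ is no proper power, ruling out several direct factors), or $17\nmid|M|$, whence $G=M\rtimes P$ is a split extension with $M$ a direct product of nonabelian simple $17'$-groups and $[M:\C_M(P)]=35$; an abelian $M$ is impossible, as then this index would be a prime power.

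In the split case I would finish by controlling the simple factors: $P$ cannot permute $17$ factors in one orbit without making $[M:\C_M(P)]$ enormous, so $P$ normalizes each factor $S_i$ and $35=\prod_i[S_i:\C_{S_i}(P)]$; a factor contributing $5$ or $7$ would be a nonabelian simple subgroup of $\Sym(5)$ or $\Sym(7)$ carrying an order-$17$ automorphism, which is impossible since $17\nmid|\Aut(S_i)|$ there. This leaves only \emph{essentially simple} configurations: a nonabelian simple group with exactly $35$ Sylow $17$-subgroups, and a residual single factor $S$ with $17\nmid|S|$, $17\mid|\Out(S)|$ and $[S:\C_S(P)]=35$. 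Eliminating these \emph{without} invoking the classification is the main obstacle and the real heart of the theorem. Here I would lean on the facts that $P$ is abelian — so Burnside's transfer theorem gives $\N_G(P)>\C_G(P)$ and $\N_G(P)/\C_G(P)\hookrightarrow\Aut(P)$, a group of order dividing $16$ or $|\GL_2(17)|$ — and that $G$ is a transitive group of degree $35$ whose possible block systems correspond to intermediate subgroups $\N_G(P)<L<G$ of index $5$ or $7$; pinning down $|G|$ together with the local structure at $17$, $5$, and $7$ and extracting a numerical contradiction by elementary counting is where I expect the delicate work to concentrate.
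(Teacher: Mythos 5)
Your reductions are correct as far as they go, but the proposal stops exactly where the theorem begins. The minimality arguments, the faithful transitive action on $35$ points, the observation that $P$ has orbit sizes $1,17,17$ (hence $P$ embeds in $C_{17}\times C_{17}$ and is elementary abelian), and the minimal-normal-subgroup dichotomy are all sound; so is the elimination of simple factors of index $5$ or $7$, though that step already needs the list of nonabelian simple subgroups of $S_5$ and $S_7$ together with their automorphism group orders, which is more than first principles. The trouble is that the two terminal configurations you isolate --- a nonabelian simple group with exactly $35$ Sylow $17$-subgroups, and a simple $17'$-group $S$ with $17\mid\lvert\Out(S)\rvert$ admitting an order-$17$ automorphism whose centralizer has index $35$ --- are never eliminated; you explicitly defer them as ``delicate work.'' That deferral \emph{is} the theorem. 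Your dichotomy essentially reproduces M.~Hall's reduction of Sylow numbers to simple groups (cited in the paper's introduction), and Hall needed Brauer's theory of blocks of defect $1$ precisely to dispose of the simple case; nothing in your sketch (Burnside transfer, block systems of the degree-$35$ action, ``elementary counting'') is developed far enough to substitute for that machinery. As written, this is a reduction, not a proof, and it commits to the one route that historically requires deep tools.

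The paper avoids simple groups entirely by exploiting the specific arithmetic $35=2\cdot 17+1$ and $17-1=2^4$. In outline: a minimal counterexample embeds faithfully not just in $S_{35}$ but in $A_{35}$ (passing to $G\cap A_{35}$ preserves the counterexample); Brodkey's theorem on abelian Sylow subgroups gives $Q\in\Syl_{17}(G)$ with $P\cap Q=1$, and the orbit-stabilizer theorem then forces $\lvert P\rvert=17$, so a generator of $P$ is a product of two disjoint $17$-cycles. A direct count shows the centralizer of such an element in $A_{34}$ has order exactly $17^2$, whence $\C_G(P)=P$. Then $\N_G(P)/\C_G(P)=\N_G(P)/P$ embeds in $\Aut(P)$, cyclic of order dividing $16$; since $\lvert G:\N_G(P)\rvert=35$ is odd, the Sylow $2$-subgroups of $G$ are cyclic, and an elementary induction (left regular representation plus the sign of a $2^n$-cycle) produces a normal subgroup of order $5\cdot 7\cdot 17$, which equals $G$ by minimality. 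Finally, in a group of order $5\cdot 7\cdot 17$ the Sylow $5$-subgroup $T$ is normal, so $PT$ is a subgroup in which $P$ is normal, giving $T\le\N_G(P)$ and the contradiction $\lvert G:\N_G(P)\rvert<35$. If you want to salvage your approach, the fix is not to press on toward simple groups but to notice, right after your orbit computation, that the alternating embedding and the centralizer count close the argument without ever invoking a minimal normal subgroup.
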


The Fermat prime $17$ is chosen to make the proof as easy as possible. Apart from Sylow's theorem we only use first principles of group actions. It seems that such an elementary proof has not appeared in the literature so far.

Lastly, we remark that Frobenius~\cite{FrobeniusSylow2} has extended Sylow's theorem to the following: If a prime power $p^a$ divides the order of a finite group $G$, then the number $n_{p^a}$ of subgroups of order $p^a$ in $G$ is congruent to $1$ modulo $p$. Moreover, if $p^{a+1}$ divides $|G|$, it is known by work of P. Hall~\cite[Lemma~4.61 and Theorem~4.6]{HallFrobenius} that $n_{p^a}$ is congruent to $1$ or $1+p$ modulo $p^2$. In particular, the number of $17$-subgroups of a fixed order of any finite group is never $35$. 
This gives rise to \emph{pseudo Frobenius numbers} which are those positive integers $n$ congruent to $1$ or $1+p$ modulo $p^2$ such that no finite group has exactly $n$ subgroups of order $p^a$ for some $a\ge 0$. The existence question of pseudo Frobenius numbers will be resolved in a different paper (see \cite{SambaleFrob}).

\section{Proof of Theorem~A}

We assume that the reader is familiar with elementary group theory as it is given for example in \cite[Chapter I]{Lang}. In order to introduce notation we review a few basic facts.

In the following $G$ is always a finite group with identity $1$. 
Let $\Omega$ be a finite nonempty set. Then the permutations of $\Omega$ form the \emph{symmetric group} $\Sym(\Omega)$ with respect to the composition of maps. Let $S_n:=\Sym(\{1,\ldots,n\})$ be the symmetric group of \emph{degree} $n$. The even permutations in $S_n$ form the \emph{alternating group} $A_n$ of degree $n$. Recall that $|S_n|=n!$ and $|S_n:A_n|=2$ for $n\ge 2$.

An \emph{action} of $G$ on $\Omega$ is a map 
\begin{align*}
G\times\Omega&\to\Omega,\\
(g,\omega)&\mapsto {^g\omega}
\end{align*}
such that $^1\omega=\omega$ and $^{gh}\omega={^g({^h\omega})}$ for all $\omega\in\Omega$ and $g,h\in G$. 
Every action determines a group homomorphism $\sigma:G\to\Sym(\Omega)$ which sends $g\in G$ to the permutation $\omega\mapsto{^g\omega}$ of $\Omega$.
We call $\Ker(\sigma)$ the \emph{kernel} of the action. If $\Ker(\sigma)=1$, we say that $G$ acts \emph{faithfully} on $\Omega$. 
For $\omega\in\Omega$, the set $^G\omega:=\{{^g\omega}:g\in G\}$ is called the \emph{orbit} of $\omega$ under $G$. Finally, the \emph{stabilizer} of $\omega$ in $G$ is given by $G_\omega:=\{g\in G:{^g\omega}=\omega\}\le G$. 

\begin{Prop}[Orbit-stabilizer theorem]\label{orbstab}
For $g\in G$ and $\omega\in\Omega$ we have 
\[|^G\omega|=|G:G_\omega|.\]
\end{Prop}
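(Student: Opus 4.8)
The plan is to exhibit an explicit bijection between the orbit $^G\omega$ and the set $G/G_\omega$ of left cosets of the stabilizer, since by definition the number of such cosets is exactly the index $|G:G_\omega|$. The natural candidate is the map
\[
\phi:G/G_\omega\to{}^G\omega,\qquad gG_\omega\mapsto{}^g\omega.
\]
First I would check that $\phi$ is well defined, which is the one point where the coset structure really enters. If $gG_\omega=hG_\omega$ for $g,h\in G$, then $h^{-1}g\in G_\omega$, so ${}^{h^{-1}g}\omega=\omega$; applying $h$ and using the action axiom ${}^{hk}\omega={}^h({}^k\omega)$ gives ${}^g\omega={}^h\omega$, as required.

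Next I would verify surjectivity and injectivity. Surjectivity is immediate from the definition of the orbit: every element of $^G\omega$ has the shape ${}^g\omega$ for some $g\in G$, and this is precisely $\phi(gG_\omega)$. For injectivity I would run the previous computation in reverse: if ${}^g\omega={}^h\omega$, then applying $h^{-1}$ and using ${}^1\omega=\omega$ yields ${}^{h^{-1}g}\omega=\omega$, so $h^{-1}g\in G_\omega$ and hence $gG_\omega=hG_\omega$. Thus $\phi$ is a bijection, and counting elements gives $|{}^G\omega|=|G/G_\omega|=|G:G_\omega|$.

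There is no serious obstacle here; the only thing to watch is the bookkeeping with the left action, namely using ${}^{hk}\omega={}^h({}^k\omega)$ and ${}^1\omega=\omega$ consistently so that both the well-definedness and the injectivity steps come out in the correct direction. In particular, the equivalence $gG_\omega=hG_\omega\iff h^{-1}g\in G_\omega$ is what allows the same short calculation to serve both purposes, so once $\phi$ is seen to be well defined the rest follows almost immediately.
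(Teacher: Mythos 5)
Your proof is correct and follows exactly the approach the paper takes: the paper defines the same map $G/G_\omega\to{}^G\omega$, $gG_\omega\mapsto{}^g\omega$, and declares it a well-defined bijection (citing Lang), whereas you carry out the verification of well-definedness, surjectivity, and injectivity in full. The details you supply are precisely the ones the paper leaves to the reader, so there is nothing to add or correct.
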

\begin{proof}
It is easy to check that the map $G/G_\omega\to {^G\omega}$, $gG_\omega\mapsto{^g\omega}$ is a well-defined bijection (see \cite[Proposition~I.5.1]{Lang}).
\end{proof}

The most relevant action in the situation of Sylow's theorem is the action of $G$ on itself by \emph{conjugation}, i.\,e., $^gx:=gxg^{-1}$ for $g,x\in G$. Then the orbits are called \emph{conjugacy classes} and the stabilizer of $x$ is the \emph{centralizer} $\C_G(x):=\{g\in G:gx=xg\}$. Conjugation also induces an action of $G$ on the set of subgroups of $G$. Here the stabilizer of $H\le G$ is the \emph{normalizer} $\N_G(H):=\{g\in G:gH=Hg\}$. 
Clearly, $\N_G(H)$ acts by conjugation on $H$ and the corresponding kernel is 
\[\C_G(H):=\{g\in G:gh=hg\ \forall h\in H\}\unlhd\N_G(H).\]
The action on the set of subgroups can be restricted onto the set $\Syl_p(G)$ of Sylow $p$-subgroups, since conjugation preserves order. Then the following supplement to Sylow's theorem implies that this action of $G$ has only one orbit on $\Syl_p(G)$.

\begin{Prop}[\textsc{Sylow}'s second theorem]\label{sylow2}
Let $P\in\Syl_p(G)$. Then every $p$-subgroup of $G$ is conjugate to a subgroup of $P$. In particular, all Sylow $p$-subgroups of $G$ are conjugate. 
\end{Prop}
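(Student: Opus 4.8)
The plan is to exploit the orbit–stabilizer theorem (\autoref{orbstab}) via a carefully chosen action. Write $|G|=p^am$ with $p\nmid m$, so that $|G:P|=m$. The key idea is to let an arbitrary $p$-subgroup $Q\le G$ act on the coset space $\Omega:=\{gP:g\in G\}$ by left multiplication, i.\,e.\ ${}^q(gP):=qgP$ for $q\in Q$ and $gP\in\Omega$. One checks at once that ${}^1(gP)=gP$ and ${}^{qq'}(gP)={}^q({}^{q'}(gP))$, so this is a genuine action.

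First I would analyse the orbit sizes. By \autoref{orbstab} every orbit of $Q$ on $\Omega$ has size $|Q:Q_{gP}|$ dividing $|Q|$, hence is a power of $p$. Since the orbits partition $\Omega$ and $|\Omega|=m$ is coprime to $p$, not every orbit can have size divisible by $p$; otherwise $p$ would divide the sum $m$. Therefore $Q$ fixes at least one coset $gP\in\Omega$.

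Next I would translate this fixed point into the desired conjugacy. The condition $qgP=gP$ for all $q\in Q$ means $g^{-1}qg\in P$ for all $q\in Q$, that is $g^{-1}Qg\le P$, equivalently $Q\le{}^gP$. Thus $Q$ is conjugate (by $g^{-1}$) to the subgroup ${}^{g^{-1}}Q=g^{-1}Qg$ of $P$, which is the first assertion.

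Finally the ``in particular'' follows by comparing orders: if $Q$ is itself a Sylow $p$-subgroup, then $|Q|=|P|=p^a$, and since $Q$ is conjugate to a subgroup of $P$ while conjugation preserves order, that subgroup must equal $P$, whence $Q$ is conjugate to $P$. I expect the only genuinely delicate point to be the counting step, namely recognising that making $Q$ (rather than $G$) act on $\Omega=G/P$ forces a fixed point precisely because $|G:P|$ is prime to $p$; once the right action is chosen, everything else is routine verification.
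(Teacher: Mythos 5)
Your proposal is correct, and every step checks out: the orbit sizes under the $Q$-action on $G/P$ are $p$-powers by \autoref{orbstab}, the coprimality $p\nmid|G:P|=m$ forces an orbit of size $1$, the fixed coset $gP$ gives $g^{-1}Qg\le P$, and the order comparison settles the ``in particular.'' Note, however, that the paper does not prove this proposition at all --- it simply cites Lang's textbook --- so what you have done is supply the self-contained argument that the paper delegates to a reference; your argument is in fact the classical one (essentially the proof found in Lang), and it fits the paper's framework well, since it uses nothing beyond the orbit-stabilizer theorem already established as \autoref{orbstab}. The one point worth making explicit in a polished write-up is the counting step: each orbit has size a power of $p$, so an orbit avoids being divisible by $p$ only if it has size exactly $1$; summing over orbits, if there were no fixed point then $p$ would divide $|\Omega|=m$, contradicting $p\nmid m$. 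This is exactly the ``delicate point'' you flagged, and your treatment of it is sound.
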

\begin{proof}
See \cite[Theorem~I.6.4]{Lang}.
\end{proof}

The Propositions~\ref{orbstab} and \ref{sylow2} imply that $\lvert\Syl_p(G)\rvert=|G:\N_G(P)|$ for any $P\in\Syl_p(G)$. Hence, by Lagrange's theorem, the number of Sylow $p$-subgroups of $G$ divides $|G|$ (see \cite[Proposition~I.2.2]{Lang}). Moreover, the \emph{$p$-core}
\[\pcore_p(G):=\bigcap_{P\in\Syl_p(G)}P\]
of $G$ lies in the kernel of the conjugation action of $G$ on $\Syl_p(G)$.

Our next ingredient is a less known result by Brodkey~\cite{Brodkey}. 

\begin{Prop}[\textsc{Brodkey}]\label{brodkey}
Suppose that $G$ has abelian Sylow $p$-subgroups. Then there exist $P,Q\in\Syl_p(G)$ such that $P\cap Q=\pcore_p(G)$.
\end{Prop}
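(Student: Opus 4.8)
The plan is to choose $P,Q\in\Syl_p(G)$ so that the order $|P\cap Q|$ is as small as possible and to prove that this minimal intersection $D:=P\cap Q$ already equals $\pcore_p(G)$. One inclusion is free: since $\pcore_p(G)$ lies in every Sylow $p$-subgroup, it lies in every intersection of two of them, so $\pcore_p(G)\le D$. Everything therefore rests on the reverse inclusion $D\le\pcore_p(G)$, i.e.\ on showing that $D$ is contained in \emph{every} $R\in\Syl_p(G)$, equivalently that $D\trianglelefteq G$.

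The abelian hypothesis enters exactly once, but decisively. Because $P$ and $Q$ are abelian and $D\le P\cap Q$, both $P$ and $Q$ centralise $D$; hence $P,Q\le\C_G(D)\le\N_G(D)=:N$. Thus $P$ and $Q$ are full Sylow $p$-subgroups of $N$, and since $D\trianglelefteq N$ we obtain $D\le\pcore_p(N)\le P\cap Q=D$, so that $D=\pcore_p(N)$. In other words, passing to the normaliser $N$ makes $D$ its own $p$-core. I would also record the cheap remark that $\N_P(R)=P\cap R$ for every $R\in\Syl_p(G)$ (a $p$-element normalising the Sylow subgroup $R$ must already lie in $R$), which identifies the $P$-orbit sizes on $\Syl_p(G)$ with the indices $|P:P\cap R|$.

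The main obstacle is upgrading ``$D=\pcore_p(N)$'' to ``$D\trianglelefteq G$''. This is where minimality must be exploited a second time, since the self-normalising-core condition alone does not force normality: every Sylow subgroup $S$ satisfies $S=\pcore_p(\N_G(S))$ without being normal. My proposed route is induction on $|G|$ after the harmless reduction (quotient by $\pcore_p(G)$) to the case $\pcore_p(G)=1$, where the goal becomes $D=1$. Assuming $D\neq1$, pick $1\neq z\in D$; then $z\in\Z(P)\cap\Z(Q)$, so $P,Q\in\Syl_p(\C_G(z))$, while $z\notin\Z(G)$ (a central $p$-element would generate a nontrivial normal $p$-subgroup) forces $\C_G(z)<G$. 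Minimality makes $D$ the smallest intersection inside $\C_G(z)$ as well, so the inductive hypothesis gives $D=\pcore_p(\C_G(z))$, whence $\C_G(z)\le\N_G(D)=N$ for every $1\neq z\in D$. From this one deduces that distinct conjugates of $D$ meet trivially, and hence that each $R\in\Syl_p(G)$ either contains $D$ or meets it trivially.

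The remaining, and hardest, point is to turn this ``trivial intersection'' picture into a contradiction with $\pcore_p(G)=1$. Here the abelian structure of $P$ should combine with Burnside-type control of fusion (the $G$-conjugates of $D$ lying in the abelian Sylow $P$ are already $\N_G(P)$-conjugate) to exclude the configuration in which no two Sylow subgroups meet trivially. I expect this fusion/counting step to be the crux of the whole argument; the two preceding reductions are routine once the abelian hypothesis has been used to push $P$ and $Q$ into $\N_G(D)$.
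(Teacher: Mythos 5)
Your opening moves coincide with the paper's: choose $P,Q\in\Syl_p(G)$ with $|P\cap Q|$ minimal, observe that $\pcore_p(G)\le D:=P\cap Q$ is free, and use the abelian hypothesis to place $P,Q\le\N_G(D)=:N$, so that $P$ and $Q$ are Sylow $p$-subgroups of $N$. But from there your proposal diverges into an induction on $|G|$ and, crucially, it does not close. Your intermediate steps are in fact correct: after reducing to $\pcore_p(G)=1$, the inductive hypothesis applied to $\C_G(z)<G$ does give $D=\pcore_p(\C_G(z))$ and hence $\C_G(z)\le N$ for every $1\ne z\in D$, and the dichotomy that each Sylow subgroup contains $D$ or meets it trivially follows. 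But you then explicitly defer the contradiction to an unspecified ``fusion/counting step'' which you yourself call the crux. That step is the entire content of the theorem, and it is not routine: a nontrivial $p$-subgroup $D$ with $\C_G(z)\le\N_G(D)$ for all $1\ne z\in D$ and trivial intersection with its distinct conjugates is exactly the kind of configuration (a strongly $p$-embedded normalizer) whose exclusion typically requires transfer, character theory, or classification-type results. Burnside's fusion lemma tells you that conjugates of $D$ lying in the abelian Sylow subgroup $P$ are fused by $\N_G(P)$, but by itself it produces no contradiction. So as written the proof has a genuine hole precisely at its decisive moment.

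What you are missing is that minimality can be exploited a second time in a far more direct way, with no induction and no attempt to prove $D\unlhd G$ at all. Brodkey's (and the paper's) argument: having placed $P,Q\in\Syl_p(N)$, take an \emph{arbitrary} $S\in\Syl_p(G)$ and apply Sylow's second theorem inside $N$ to the $p$-subgroup $S\cap N$: there is $g\in N$ with ${}^{g}(S\cap N)={}^{g}S\cap N\le P$. Since $Q\le N$, this gives
\[{}^{g}S\cap Q={}^{g}S\cap N\cap Q\le P\cap Q,\]
and minimality, applied now to the pair ${}^{g}S,\,Q$, forces equality, whence $P\cap Q\le{}^{g}S$. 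Conjugating back by $g^{-1}$ and using that $g\in N$ normalizes $P\cap Q$ yields $P\cap Q\le S$. As $S$ was arbitrary, $P\cap Q\le\pcore_p(G)$, and the proof is complete. The point is that one never shows $D$ is normal in $G$; one shows $D\le S$ for each $S$ separately, by replacing $S$ with a conjugate ${}^{g}S$ under an element of $N$, so that the containment can be pulled back. With this trick, your inductive scaffolding, the TI analysis, and the appeal to fusion all become unnecessary.
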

\begin{proof}
Choose $P,Q\in\Syl_p(G)$ such that $|P\cap Q|$ is as small as possible. Since $P$ and $Q$ are abelian, it follows that $P\cap Q\unlhd P$ and $P\cap Q\unlhd Q$. This means that $P$ and $Q$ are Sylow $p$-subgroups of $N:=\N_G(P\cap Q)$. Now let $S\in\Syl_p(G)$ be arbitrary. By \autoref{sylow2}, there exists $g\in N$ such that $^gS\cap N={^g(S\cap N)}\le P$. We conclude that
\[^gS\cap Q={^gS}\cap N\cap Q\le P\cap Q.\]
By the choice of $P$ and $Q$, we have equality $P\cap Q={^gS}\cap Q\le{^gS}$. Conjugating by $g^{-1}$ on both sides yields $P\cap Q={^{g^{-1}}(P\cap Q)}\le S$. Since $S$ was arbitrary, we obtain $P\cap Q\le\pcore_p(G)\le P\cap Q$ as desired.
\end{proof}

For the proof of Theorem~A we need three more specific lemmas.

\begin{Lem}\label{centalt}
Let $p$ be an odd prime and let $\sigma$ be a product of two disjoint $p$-cycles in $A_{2p}$. Then \[\lvert\C_{A_{2p}}(\sigma)\rvert=p^2.\]
\end{Lem}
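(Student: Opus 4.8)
The plan is to compute the centralizer first in the larger group $S_{2p}$ and only afterwards cut down to $A_{2p}$ by a parity argument. Write $\sigma=\alpha\beta$ as a product of two disjoint $p$-cycles $\alpha$ and $\beta$ supported on complementary $p$-element sets $A$ and $B$. First I would determine $\C_{S_{2p}}(\sigma)$. Any permutation commuting with $\sigma$ must permute the two $p$-cycles of $\sigma$ among themselves, so it either stabilizes both $A$ and $B$ as sets or interchanges them. A permutation stabilizing both sets and commuting with $\sigma$ restricts on $A$ to an element commuting with $\alpha$ and on $B$ to one commuting with $\beta$; since the centralizer of a single $p$-cycle inside the symmetric group on its support is the cyclic group it generates, these are exactly the elements of $\langle\alpha\rangle\times\langle\beta\rangle$, a subgroup of order $p^2$. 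Adding the coset coming from the interchanging permutations doubles this, so $\lvert\C_{S_{2p}}(\sigma)\rvert=2p^2$. (This also follows at once from the standard formula for the order of a centralizer in $S_n$ in terms of cycle type.)

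Next I would use $\C_{A_{2p}}(\sigma)=\C_{S_{2p}}(\sigma)\cap A_{2p}$. Because $A_{2p}$ has index $2$ in $S_{2p}$, this intersection has index $1$ or $2$ in $\C_{S_{2p}}(\sigma)$, and its order is the desired $p^2$ precisely when $\C_{S_{2p}}(\sigma)\not\le A_{2p}$. Thus the whole lemma reduces to exhibiting a single odd permutation that commutes with $\sigma$.

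For this I would write $\alpha=(a_1\ \ldots\ a_p)$ and $\beta=(b_1\ \ldots\ b_p)$ and take $\tau:=\prod_{i=1}^{p}(a_i\ b_i)$, the permutation interchanging $A$ and $B$ index by index. A direct check gives $\tau\alpha\tau^{-1}=\beta$ and $\tau\beta\tau^{-1}=\alpha$, so $\tau$ centralizes $\sigma=\alpha\beta=\beta\alpha$. Since $\tau$ is a product of $p$ transpositions and $p$ is odd, $\tau$ is an odd permutation; hence $\C_{S_{2p}}(\sigma)\not\le A_{2p}$ and the index is exactly $2$, giving $\lvert\C_{A_{2p}}(\sigma)\rvert=2p^2/2=p^2$.

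I expect the only subtle point to be the parity bookkeeping, which uses the oddness of $p$ twice: once to see that each $p$-cycle is already even, so the order-$p^2$ base group $\langle\alpha\rangle\times\langle\beta\rangle$ contributes no odd elements, and once to see that the interchanging element $\tau$ is odd. Everything else is routine orbit-counting on cycles, and the conclusion is immediate once the odd element $\tau$ is in hand.
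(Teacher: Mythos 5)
Your proof is correct, and its skeleton matches the paper's (work in $S_{2p}$, exhibit the odd interchanging permutation, cut down to $A_{2p}$ by an index-$2$ argument), but the way you pin down $\lvert\C_{S_{2p}}(\sigma)\rvert=2p^2$ is genuinely different. You compute the centralizer in $S_{2p}$ structurally: a centralizing element either stabilizes or swaps the two supports, the stabilizing ones form $\langle\alpha\rangle\times\langle\beta\rangle$ by the standard fact that a $p$-cycle generates its own centralizer in the symmetric group on its support (equivalently, by the cycle-type formula for centralizer orders), and the swapping ones form one further coset. The paper deliberately avoids importing any such fact --- it even remarks that it prefers a ``more direct argument'' over the orbit-stabilizer route, which is essentially what your appeal to the centralizer formula amounts to. Instead it only needs an upper bound, obtained by counting choices: $\tau(1)$ has at most $2p$ possible images, which then determine $\tau$ on the first $p$-cycle, and $\tau(p+1)$ has at most $p$ remaining images, which determine the rest; this gives $\lvert\C_{S_{2p}}(\sigma)\rvert\le 2p^2$, and is paired with the lower bound $p^2$ from the explicit subgroup $\langle\alpha,\beta\rangle\le\C_{A_{2p}}(\sigma)$. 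Both proofs then finish identically with $\tau=(a_1,b_1)\cdots(a_p,b_p)$, odd because $p$ is odd. Your version buys more information (the exact order and structure of $\C_{S_{2p}}(\sigma)$, with no need for a separate lower bound) at the cost of invoking a standard but unproved fact; the paper's version buys complete self-containedness, in keeping with its stated aim of using only first principles. One small expository point in your write-up: you assert that the swapping elements ``double'' the count before you have produced any swapping element, so strictly the $2p^2$ computation should come after (or cite) the construction of $\tau$; as written it is only an upper bound $\le 2p^2$ until $\tau$ is in hand, which is in fact all the argument needs.
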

\begin{proof}
Although the claim can be proved with the orbit-stabilizer theorem, we prefer a more direct argument.
First observe that $\sigma$ is in fact an even permutation and therefore lies in $A_{2p}$.
Without loss of generality, we may assume that $\sigma=(1,\ldots,p)(p+1,\ldots,2p)$. 
Then $\langle(1,\ldots,p),(p+1,\ldots,2p)\rangle\le\C_{A_{2p}}(\sigma)$ and we obtain $\lvert\C_{A_{2p}}(\sigma)\rvert\ge p^2$. 

For the converse inequality, let $\tau\in\C_{S_{2p}}(\sigma)$. There are (at most) $2p$ choices for $\tau(1)$. For $i=2,\ldots,p$ we have $\tau(i)=\tau(\sigma^{i-1}(1))=\sigma^{i-1}(\tau(1))$. Thus after $\tau(1)$ is fixed, there are only $p$ possibilities for $\tau(p+1)$ left. Again $\tau(p+i)=\sigma^{i-1}(\tau(p+1))$ for $i=2,\ldots,p$. Altogether there are at most $2p^2$ choices for $\tau$ and we obtain $\lvert\C_{S_{2p}}(\sigma)\rvert\le 2p^2$. Observe that \[\tau:=(1,p+1)(2,p+2)\ldots(p,2p)\in\C_{S_{2p}}(\sigma),\] 
but since $p$ is odd we have $\tau\notin A_n$. Hence, $\C_{A_{2p}}(\sigma)\subsetneq\C_{S_{2p}}(\sigma)$ and Lagrange's theorem yields $\lvert\C_{A_{2p}}(\sigma)\rvert\le\lvert\C_{S_{2p}}(\sigma)\rvert/2\le p^2$.
\end{proof}

\begin{Lem}\label{NC}
Assume that $G$ has a Sylow $p$-subgroup $P$ of order $p$. 
Then $\N_G(P)/\C_G(P)$ is cyclic of order dividing $p-1$.
\end{Lem}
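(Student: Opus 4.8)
The plan is to exploit the conjugation action of $\N_G(P)$ on $P$. Since $|P|=p$, the group $P=\langle x\rangle$ is cyclic, generated by any nonidentity element $x$. Conjugation furnishes a homomorphism $\sigma\colon\N_G(P)\to\Aut(P)$ sending $g$ to the automorphism $y\mapsto{^g y}$ of $P$; this is well-defined precisely because $g\in\N_G(P)$ maps $P$ onto $P$. By the very definition of the centralizer, $\Ker(\sigma)=\{g\in\N_G(P):{^g y}=y\ \forall y\in P\}=\C_G(P)$, and since $\C_G(P)\unlhd\N_G(P)$ the quotient is defined. The first isomorphism theorem then yields an embedding $\N_G(P)/\C_G(P)\hookrightarrow\Aut(P)$, so it suffices to understand $\Aut(P)$.

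First I would observe that any automorphism $\alpha$ of $P=\langle x\rangle$ is determined by $\alpha(x)$, which must again be a generator, hence $\alpha(x)=x^k$ for some $k\in\{1,\ldots,p-1\}$; conversely each such $k$ yields an automorphism. The map $k\mapsto\alpha$ identifies the multiplicative group $(\ZZ/p\ZZ)^\times$ with $\Aut(P)$, so in particular $\lvert\Aut(P)\rvert=p-1$. Combined with the embedding above, Lagrange's theorem already shows that $\lvert\N_G(P)/\C_G(P)\rvert$ divides $p-1$.

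The only genuinely nontrivial ingredient — and the main point to address — is that $(\ZZ/p\ZZ)^\times$ is \emph{cyclic}, equivalently that $\Aut(P)$ is cyclic. This is the classical fact that the multiplicative group of the finite field $\FF_p$ is cyclic, which follows from the observation that over a field the polynomial $X^d-1$ has at most $d$ roots, together with a short counting argument on the number of elements of each order in a finite abelian group. Granting this, $\Aut(P)$ is cyclic of order $p-1$; since every subgroup of a cyclic group is cyclic, the embedded image $\N_G(P)/\C_G(P)$ is cyclic as well, and its order divides $p-1$. This completes the argument.
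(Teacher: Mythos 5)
Your proof is correct and follows essentially the same route as the paper: conjugation gives a homomorphism $\N_G(P)\to\Aut(P)$ with kernel $\C_G(P)$, the first isomorphism theorem embeds $\N_G(P)/\C_G(P)$ into $\Aut(P)\cong(\ZZ/p\ZZ)^\times$, and cyclicity of $(\ZZ/p\ZZ)^\times$ together with Lagrange's theorem finishes the argument. The only differences are cosmetic: you map directly into $\Aut(P)$ rather than passing through $\Sym(P)$ first, and you sketch the proof that $(\ZZ/p\ZZ)^\times$ is cyclic where the paper simply cites it as a standard fact.
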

\begin{proof}
As we have remarked after \autoref{orbstab}, $\N_G(P)$ acts by conjugation on $P$ with kernel $\C_G(P)$. By the first isomorphism theorem, $\N_G(P)/\C_G(P)$ is isomorphic to a subgroup of $\Sym(P)$. Since conjugation induces automorphisms on $P$, we may even regard $\N_G(P)/\C_G(P)$ as a subgroup of the automorphism group $\Aut(P)$. By hypothesis, $P\cong\ZZ/p\ZZ$ and we obtain $\Aut(P)\cong(\ZZ/p\ZZ)^\times$. Now a standard fact in algebra states that $(\ZZ/p\ZZ)^\times$ is cyclic of order $p-1$ (see \cite[Theorem~IV.1.9]{Lang}). The claim follows with Lagrange's theorem.
\end{proof}

\begin{Lem}\label{cyc2}
If $G$ has a cyclic Sylow $2$-subgroup $P$, then there exists a unique $N\unlhd G$ such that $|G:N|=|P|$.
\end{Lem}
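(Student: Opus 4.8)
The plan is to induct on $|G|$ and realize $N$ as a normal $2$-complement, using nothing beyond the regular action of $G$ on itself and the sign of a permutation. Write $|P|=2^n$ and let $m$ be the odd part of $|G|$, so $|G|=2^nm$; the subgroup I seek will have order $m$. If $n=0$, then $G$ itself has odd order and I may take $N=G$, so I assume $n\ge 1$ and fix a generator $x$ of the cyclic group $P$.

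The key step is to produce a normal subgroup of index $2$. Letting $G$ act on itself by left multiplication $^gy:=gy$ gives a homomorphism $\sigma:G\to\Sym(G)$, and I would read off the cycle type of $\sigma(x)$. Since the orbit of any $y$ under $\langle x\rangle$ is $\{x^iy:i\}$ and has size $|x|=2^n$, the permutation $\sigma(x)$ is a product of exactly $m$ disjoint $2^n$-cycles. Each such cycle has even length and is therefore an odd permutation, and as $m$ is odd the product $\sigma(x)$ is again odd. Composing $\sigma$ with the sign homomorphism $\Sym(G)\to\{\pm1\}$ thus yields a surjection $\epsilon:G\to\{\pm1\}$, whose kernel $H:=\Ker(\epsilon)$ is normal of index $2$ in $G$.

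Next I would apply the inductive hypothesis to $H$. Because $x\notin H$ while $x^2\in H$, the intersection $P\cap H=\langle x^2\rangle$ has index $2$ in the cyclic group $P$, hence is itself cyclic of order $2^{n-1}$, and it is a Sylow $2$-subgroup of $H$. By induction $H$ contains a \emph{unique} normal subgroup $N$ with $|H:N|=2^{n-1}$, so that $|N|=m$. The point I must then verify is that $N$ is normal not merely in $H$ but in all of $G$: since $N$ is the unique normal subgroup of $H$ of its order, it is invariant under every automorphism of $H$, in particular under conjugation by elements of $G$; thus $N$ is characteristic in $H\unlhd G$ and so $N\unlhd G$. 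This gives existence, with $|G:N|=|G:H|\,|H:N|=2\cdot 2^{n-1}=2^n=|P|$.

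For uniqueness I would argue directly from orders, independently of the induction (this is also what licenses the characteristic argument above, applied to $H$). Any normal $N\unlhd G$ with $|G:N|=2^n$ has odd order $m$. If $N_1,N_2$ are two such subgroups, then $N_1N_2$ is again a subgroup, both factors being normal, of order $m^2/d$ with $d:=|N_1\cap N_2|$ dividing $m$. Since $m^2/d=m\cdot(m/d)$ divides $|G|=2^nm$, the odd integer $m/d$ divides $2^n$ and must equal $1$, forcing $d=m$ and hence $N_1=N_2$. The only genuine obstacle is the passage from normality in $H$ to normality in $G$, which the uniqueness-forces-characteristic observation resolves cleanly; everything else is bookkeeping with cycle types and orders.
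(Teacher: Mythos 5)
Your proof is correct and follows essentially the same route as the paper: induction, the sign of the left regular representation producing the index-$2$ normal subgroup $H$, the inductive hypothesis applied to $H$, and promotion of $N\unlhd H$ to $N\unlhd G$ via its uniqueness (the paper conjugates directly, $gNg^{-1}\unlhd gHg^{-1}=H$, rather than saying ``characteristic,'' but it is the identical argument). The only real deviation is your uniqueness step: the paper pushes a competitor $M\unlhd G$ back into $H$ and reuses the inductive uniqueness there, whereas your induction-free order count $|N_1N_2|=m^2/d$ dividing $2^nm$ is equally valid and arguably cleaner.
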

\begin{proof}
Since this is a common exercise in many textbooks (see \cite[Exercise~6.10]{IsaacsAlgebra} for instance), we only sketch the proof. 
We argue by induction on $|P|=2^n$. For $n=0$ the claim holds with $N=G$. Thus, let $n\ge 1$.
It is easy to see that $G$ acts faithfully on itself by multiplication on the left, i.\,e., $^gx:=gx$ for $g,x\in G$. Hence, we may regard $G$ as a subgroup of $S_{|G|}$. Doing so, every nontrivial element of $G$ is a permutation without fixed points.
Let $x$ be a generator of $P$. Then $x$ is a product of $|G|/2^n$ disjoint $2^n$-cycles. In particular, $x$ is an odd permutation and $H:=G\cap A_{|G|}$ is a normal subgroup of $G$ of index $2$. Moreover, $P\cap H$ is a cyclic Sylow $2$-subgroup of $H$. By induction there exists a unique $N\unlhd H$ with $|H:N|=2^{n-1}$. For $g\in G$ we have $gNg^{-1}\unlhd gHg^{-1}=H$ and $|H:gNg^{-1}|=|H:N|$. The uniqueness of $N$ shows that $N=gNg^{-1}\unlhd G$ and \[|G:N|=|G:H||H:N|=2^n.\] 
Finally, if $M\unlhd G$ with $|G:M|=2^n$, then $M\unlhd H$ and the uniqueness of $N$ gives $M=N$.
\end{proof}

\begin{proof}[Proof of Theorem~A]
Let $G$ be a minimal counterexample. 
For ease of notation let $p=17$ and $n=2p+1=35$.
 
\textbf{Step 1:} $G$ acts faithfully on $\Syl_p(G)$ and $G\le A_n$.\\
Let $K\unlhd G$ be the kernel of the conjugation action of $G$ on $\Syl_p(G)$. We show that 
\begin{align*}
\gamma:\Syl_p(G)&\to\Syl_p(G/K),\\
P&\mapsto PK/K
\end{align*}
is a bijection. For $P\in\Syl_p(G)$, the isomorphism theorems show that $PK/K\cong P/P\cap K$ is a $p$-group, and $|G/K:PK/K|=|G:PK|$ divides $|G:P|$ and thus is not divisible by $p$ (see \cite[p. 17]{Lang}). Hence, $PK/K$ is a Sylow $p$-subgroup of $G/K$. By \autoref{sylow2}, every Sylow $p$-subgroup of $G/K$ has the form $(gK)PK/K(gK)^{-1}=gPg^{-1}K/K$ for some $g\in G$. Hence, $\gamma$ is surjective. To show injectivity, let $P,Q\in\Syl_p(G)$ such that $PK/K=QK/K$. Then $PK=QK$. Since $K$ acts trivially on $\Syl_p(G)$, $P$ is the only Sylow $p$-subgroup of $PK$ and $Q$ is the only Sylow $p$-subgroup of $QK$. Hence, $P=Q$ and $\gamma$ is injective. 

It follows that $G/K$ has exactly $n$ Sylow $p$-subgroups and by the choice of $G$ we must have $K=1$. Therefore, $G$ acts faithfully on $\Syl_p(G)$ and we may regard $G$ as a subgroup of $S_n$. Since $A_n$ contains every element of odd order, every Sylow $p$-subgroup of $G$ lies in $A_n$. Consequently, $G\cap A_n$ is also a counterexample and we obtain $G\le A_n$ by minimality of $G$.

In the following we fix $P\in\Syl_p(G)$.

\textbf{Step 2:} $|P|=p$.\\
Since $|S_n|=n!$ is not divisible by $p^3$, Step 1 and Lagrange's theorem already imply $|P|\le p^2$. In particular, $P$ is abelian (see \cite[Exercise~I.24]{Lang}). Since $\pcore_p(G)$ lies in the kernel of the conjugation action on $\Syl_p(G)$, Step~1 also yields $\pcore_p(G)=1$. By Brodkey's proposition there exists $Q\in\Syl_p(G)$ such that $P\cap Q=1$. Since $Q$ is the only Sylow $p$-subgroup of $\N_G(Q)$, we conclude that $\N_P(Q)\le P\cap Q=1$. The orbit-stabilizer theorem applied to the conjugation action of $P$ on $\Syl_p(G)$ yields 
\[|P|=|P:\N_P(Q)|=|^PQ|\le n<p^2.\] 
Hence, $|P|=p$.

\textbf{Step 3:} $|G|=5\cdot 7\cdot 17$.\\
By construction, $\N_G(P)$ is the stabilizer of $P$ and Step~1 implies 
\[P\le\C_G(P)\le\N_G(P)\le S_{n-1}\cap A_n=A_{2p}.\]
It follows easily from Step~2 that $P$ has orbits of size $1$, $p$ and $p$ on $\Syl_p(G)$. Hence, $P$ is generated by a product of two disjoint $p$-cycles in $A_{2p}$. Now it follows from Step~1 and \autoref{centalt} that 
\[P\le \C_G(P)=\C_{A_{2p}}(P)\cap G=P.\] 
Consequently, $\N_G(P)/P=\N_G(P)/\C_G(P)$ is cyclic of order dividing $p-1=2^4$ by \autoref{NC}. Since $|G:\N_G(P)|=n$ is odd, $\N_G(P)$ contains a Sylow $2$-subgroup of $G$. In particular, the Sylow $2$-subgroups of $G$ are cyclic and \autoref{cyc2} yields a normal subgroup $N\unlhd G$ of order $|P||G:\N_G(P)|=5\cdot 7\cdot 17$. Since every Sylow $p$-subgroup of $G$ is contained in $N$, we have $G=N$ by minimality of $G$. 

\textbf{Step 4:} Contradiction.\\
By Sylow's theorem, $G$ has a unique Sylow $5$-subgroup $T\unlhd G$. Then $PT$ is a subgroup of $G$ of order $5\cdot 17$. Again by Sylow's theorem, $PT$ has only one Sylow $p$-subgroup. In particular $P\unlhd PT$ and $T\le\N_G(P)$. This gives the contradiction $|G:\N_G(P)|<n$. 
\end{proof}

It is possible to modify the proof above to construct more pseudo Sylow numbers. 
In fact the first three steps work more generally whenever $\lvert\Syl_p(G)\rvert=2p+1$. One ends up with a group of odd order which must be solvable according to the celebrated Feit--Thompson theorem~\cite{FeitThompson}. Then the factorization property by P. Hall mentioned in the introduction implies that $2p+1$ is a prime power. Unfortunately, the long proof of the Feit--Thompson theorem is even more challenging than the methods used by M. Hall. 
We invite the interested reader to show by elementary means that no finite group has exactly $15$ Sylow $7$-subgroups. 

\section*{Acknowledgment}
The author is supported by the German Research Foundation (project SA 2864/1-1).

\end{document}